\theoremstyle{definition}
\newtheorem{theorem}{Theorem}[section]
\newtheorem{lemma}[theorem]{Lemma}
\newtheorem{proposition}[theorem]{Proposition}
\newtheorem{corollary}[theorem]{Corollary}
\newtheorem{definition}[theorem]{Definition}
\newtheorem{conjecture}[theorem]{Conjecture}
\newtheorem{note}[theorem]{Note}
\newcommand{\la}{\lambda}
\newcommand{\sig}{\sigma}
\newcommand{\ul}{\underline}
\newcommand{\germ}{\mathfrak}
\newcommand{\RR}{\mathbb{R}}
\newcommand{\ZZ}{\mathbb{Z}}
\newcommand{\cH}{\mathcal{H}}
\newcommand{\sss}{\mathfrak{s}}
\newcommand{\ttt}{\mathfrak{t}}
\newcommand{\hh}{\mathfrak{h}}
\newcommand{\OO}{\mathcal{O}}
\date{}
\begin{document}

\pagestyle{myheadings}
\title{Unitary representations of Hecke algebras of complex reflection groups}
\author{Emanuel Stoica}
\maketitle

\begin{abstract}
We begin the study of unitary representations of Hecke algebras of complex reflections groups. We obtain a complete classification for the
Hecke algebra of the symmetric group $\mathfrak{S}_n$ over the complex numbers. Interestingly, the unitary representations in the category $\OO$ for the rational
Cherednik algebra of type A studied in \cite{ESG} correspond to the unitary  representations of the corresponding Hecke algebra via
the KZ functor defined in \cite{GGOR}.

\end{abstract}

\section{Introduction}

	In this paper, we begin the study of unitary representations of Hecke algebras of complex reflection groups. Given a complex reflection group $W$ and the corresponding Hecke algebra $\cH=\mathcal{H}_{\ul{q}}(W)$ over
the complex numbers, we investigate the existence of a certain $\cH$-invariant nondegenerate  Hermitian form on a representation $V$. The defining property of the form is its invariance under the braid group $B_W$ of $W$, namely
$(Tv,v')=(v,T^{-1}v')$ for all $T \in B_W$ and $v,v' \in V$.

A representation with such a Hermitian form will be called \emph{unitary} if the form is positive definite. Since unitary representations are semisimple, we may restrict the study of unitarity to irreducible representations. 

The main result is Theorem \ref{Main} which provides a complete classification of the unitary irreducible representations of the Hecke algebra of the symmetric group $\mathfrak{S}_n$. The proof uses the theory of tensor categories and properties of restriction functors.

The organization of the paper is the following. In section 2, we overview
a few  general definitions and results regarding the Hecke algebra $\cH$ of  the complex reflection group $G(r,1,n)$; in section 3, we construct a Hermitian form on Specht $\cH$-modules and their irreducible quotients invariant under the action of the corresponding braid group; finally, in section 4, we classify the unitary irreducible representations of the Hecke algebra of the symmetric group $\mathfrak{S}_n$.

{\bf Acknowledgments.}
The author is grateful to his advisor P. Etingof for his continuous and invaluable assistance. This research was supported in part by the 
NSF grant DMS-0504847.

\section{Preliminaries}
Let $r$ and $b$ be positive integers. We will first review the definition of the Hecke algebra of the complex reflection group $G(r,1,n)$, namely the wreath product $(\ZZ/r\ZZ)^n \rtimes \mathfrak{S}_n$. This algebra is also known as the \emph{Ariki-Koike algebra}.

\begin{definition} Given a commutative domain $R$ and a family of parameters $\ul{q}=(q,q_1,\dots, q_r) \in R^{r+1}$,
the Hecke algebra $\cH_{\ul{q}}(W)$ of the complex reflection group $W=(\ZZ/r\ZZ)^n \rtimes \mathfrak{S}_n$ is the algebra generated by $T_0,T_1,\dots,T_{n-1}$ and relations
\begin{eqnarray*}
(T_0-q_1)\ldots (T_0-q_r)=0, \\
T_0T_1T_0T_1=T_1T_0T_1T_0, \\
(T_i+1)(T_i-q)=0 &\text{for all}& i=1,\dots,n-1, \\
T_iT_{i+1}T_i=T_{i+1}T_iT_{i+1} &\text{for all}& 1 \leq i \leq n-2,\\
T_iT_j=T_jT_i &\text{for all}& 0\leq i\leq j-2 \leq n-3.
\end{eqnarray*}
\end{definition}

Note that $T_1,\dots, T_{n-1}$ generate a subalgebra of $\cH_{\ul{q}}(W)$ isomorphic to the Hecke algebra
$\cH_q(\mathfrak{S}_n)$ of the symmetric group. It has a natural basis $\{T_w|\text{ } w \in \mathfrak{S}_n\}$ over $R$ indexed by permutations, such that every $T_w$ is given by certain a product of elements $T_i$ for $1 \leq i \leq n-1$.

In our paper, we will assume $R=\mathbb{C}$. For 'most' complex values of the parameters $\ul{q}=(q,q_1,\dots, q_r)$, the Hecke algebra $\cH_{\ul{q}}(W)$ is semisimple. We will call these values \emph{generic}. 

We assume all parameters are invertible. We will use extensively the \emph{involution} $\sigma$, a semilinear involution of $\cH= \cH_{\ul{q}}$ that sends $a \mapsto \overline{a}$, $q \mapsto q^{-1}$, $q_i \mapsto q_i^{-1}$,  $T_0 \mapsto T_0^{-1}$, and $T_i \mapsto T_i^{-1}$,  for all $a \in \mathbb{C}$ and $1 \leq i \leq n-1$. This defines an involution for the universal Hecke algebra, where $\underline{q}$ are taken to be (invertible) formal parameters. When we specialize the parameters to complex numbers on the unit circle, this map can be extended to a $\mathbb{C}$-semilinear involution of the special Hecke algebra. Note that we must assume the parameters are on the unit circle to be able to define this involution for specializations to complex numbers.

We also define the $\mathbb{C}$-linear \emph{anti-involution} $^*$ on $\cH$ that sends $q \mapsto q$, $q_i \mapsto q_i$,  $T_0 \mapsto T_0$, and $T_i \mapsto T_i$, for $1 \leq i \leq n-1$. This determines an anti-involution for any specialization of variables to complex numbers.

We will follow \cite{DJM} in defining for any multipartition $\la=(\la^1,\dots,\la^r)$ of $n$ a Specht module $S^{\la}$. Given two multipartitions $\la$ and $\mu$, we say that $\la$ \emph{dominates} $\mu$ if $\sum_{i=1}^{k-1} |\la^i|+\sum_{i=1}^j \la^k_i \geq \sum_{i=1}^{k-1}|\mu^i|+\sum_{i=1}^j \mu^k_i$ for all $1\leq k \leq r$ and $j\geq 1$. If $\la$ dominates $\mu$ and $\la \neq \mu$ we write $\la \rhd \mu$. 

Let $\textbf{t}^{\la}$ be the standard $\la$-tableau 
filled with $1,2, \dots, n$ in order in the first row, second, and so on,
in $\la^1,\dots ,\la^r$. For any standard $\la$-tableau $\sss$, define $d(\sss) \in \mathfrak{S}_n$ to be the permutation such that $\mathbf{t}^{\la} d(\sss)=\sss$. It is straightforward that the set $\{d(\sss)| \sss \text{ standard } \la\text{-tableau}\}$ is in one-to-one correspondence with (right) coset representatives of $\mathfrak{S}_{\la}$ in $\mathfrak{S}_n$ where $\mathfrak{S}_{\la}=\mathfrak{S}_{|\la^1|} \times \mathfrak{S}_{|\la^2|} \times \ldots \mathfrak{S}_{|\la^r|}$ is a Young subgroup of $\mathfrak{S}_n$.  

Given the multipartition $\la$ of $n$, let $\ul{a}=(a_1,\dots,a_r)$ 
where $a_i=\sum_{j=1}^{i-1} |\la^{j}|$ for any $1 \leq i \leq r$.
Let also $x_{\la}=\sum_{w \in \mathfrak{S}_{\la}} T_w$ and
$$L_m=q^{1-m}T_{m-1}\dots T_1 T_0 T_1 \dots T_{m-1}$$ for $m=1,\dots ,n$.
We define $m_{\la}=(\prod_{k=1}^r \prod_{m=1}^{a_k} (L_m-q_k)) x_{\la} \in \cH$. For any pair of standard 
$\la$-tableaux $(\sss ,\ttt)$, set $m_{\sss \ttt}=T_{d(\sss)}^* m_{\la}T_{d(\ttt)}$.

Now, let us define the vector space $$
\bar{N}^{\la}=\mathbb{C}\{m_{\mathfrak{s} \mathfrak{t}} | \sss, \ttt \text{ standard } \mu\text{-tableaux } \text{for a multipartition }\mu \text{ of }
n, \text{ } \mu \rhd \la\} $$ which is a two-sided ideal in $\cH$ (see \cite{DJM} Proposition 3.22).

\begin{definition}
Define $z_{\la}=(\bar{N}^{\la}+m_{\la})/\bar{N}^{\la}$ to be an element in $\cH/\bar{N}^{\la}$ and the Specht module $S^{\la}=\cH z_{\la}$. 
\end{definition}

\begin{note}
In \cite{DJM}, the Specht module $S^{\la}$ is defined as a right module $z_{\la}\cH$, but we  prefer the opposite definition. The anti-involution $^*$ maps one to the other, since $m^*_{\la}=m_{\la}$ and $(\bar{N^{\la}})^*=\bar{N^{\la}}$.
\end{note}
It is well known that for generic values of the parameters, the Specht modules corresponding to multipartitions of $n$ are irreducible and exhaust all irreducible representations of $\cH$.

\section{The existence of the Hermitian form}

In this section, we assume $\cH=\cH_{\ul{q}}(W)$ is the Hecke algebra of the complex reflection group $G(r,1,n)$ defined  above. We prove that if parameters are complex numbers on the unit circle, any irreducible representation $V$ of $\cH$ is
Hermitian, namely, it admits a nondegenerate Hermitian form such that $(Tv,v')=(v,\sigma(T^*) v')$ for all $v,v' \in V$ and $T \in \cH$. Equivalently, the form is invariant under the corresponding braid group.

In order to construct the Hermitian form, we will use the symmetric bilinear form $\langle,\rangle$ on the Specht module $S^{\la}$ defined in \cite{DJM}, which satisfies 
$\langle Tv,v'\rangle=\langle v,T^*v'\rangle$ for every $v,v' \in S^{\la}$, and $T \in \cH$. If the Hecke algebra is semisimple, the Specht module is irreducible and the form is nondenegerate. If the Hecke algebra is not semisimple, the form may degenerate, and the quotient $D^{\la}=S^{\la}/rad\langle,\rangle$ will be either irreducible or zero. The family  $\{D^{\la} | \text{ } \la \text{ multipartition of } n, D^{\la} \neq 0\}$ forms a complete set of irreducible representations of $\cH$ (for more details, see \cite{DJM}).

For any multipartition $\la$ of $n$, the following holds

\begin{lemma} \label{lemma} (i) The involution $\sigma$ preserves 
$\bar{N}^{\la}$, and $\sig(m_{\la})=m_{\la}u$ for some invertible element $u \in \cH$;

(ii) The involution $\sigma$ preserves the Specht modules, namely $\sig(S^{\la})=S^{\la}$ for any $\la$.
\end{lemma}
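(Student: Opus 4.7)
The plan is to reduce everything to explicit computations on the generators of $m_\la$ and then assemble. Two key identities drive the proof. First, for the Jucys--Murphy element $L_m = q^{1-m}T_{m-1}\cdots T_1 T_0 T_1 \cdots T_{m-1}$: since the expression is a palindromic monomial, applying $\sigma$ (which inverts each $T_i$ and sends $q \mapsto q^{-1}$) yields $\sigma(L_m) = L_m^{-1}$. Second, for $x_\la$: applying $\sigma$ to the defining relation $T_i x_\la = q x_\la$ for $s_i \in \mathfrak{S}_\la$ shows that $\sigma(x_\la)$ lies in the same common left $q$-eigenspace for $\{T_i : s_i \in \mathfrak{S}_\la\}$. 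Since that eigenspace is one-dimensional within $\cH_{\mathfrak{S}_\la}$ (any eigenvector must have all coefficients in the $T_w$-basis equal), $\sigma(x_\la) = c\, x_\la$ for some scalar $c$. Comparing both sides of $\sigma(x_\la^2) = \sigma(x_\la)^2$ via $x_\la^2 = P_\la(q) x_\la$ pins down $c = q^{-\ell(w_0^\la)}$, where $w_0^\la$ is the longest element of $\mathfrak{S}_\la$.

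Combining the two identities, I use $L_m^{-1} - q_k^{-1} = -q_k^{-1} L_m^{-1}(L_m - q_k)$ and the commutativity of the $L_m$'s to rearrange
\[
\sigma(m_\la) \;=\; q^{-\ell(w_0^\la)} \prod_{k,m}(L_m^{-1}-q_k^{-1})\, x_\la \;=\; A\, m_\la,
\]
where $A = q^{-\ell(w_0^\la)} \prod_{k,m}(-q_k^{-1} L_m^{-1}) \in \cH$ is a product of scalars and invertible elements, hence itself invertible. To bring the factor to the right as the lemma states, I invoke the self-adjointness $m_\la^* = m_\la$ together with the fact that $\sigma$ commutes with $^*$ (both compositions are semilinear anti-homomorphisms that agree on generators): then $\sigma(m_\la) = \sigma(m_\la)^* = (A m_\la)^* = m_\la A^*$, so $u := A^*$ is the desired invertible element.

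Preservation of $\bar N^\la$ is then a formal consequence. Since $\sigma$ is an algebra involution and $\bar N^\la$ is the two-sided ideal spanned by $m_{\sss \ttt}$ with $\sss,\ttt$ standard $\mu$-tableaux for $\mu \rhd \la$, it suffices to verify $\sigma(m_{\sss\ttt}) \in \bar N^\la$ for each such pair. Applying $\sigma$ to $m_{\sss\ttt} = T_{d(\sss)}^* m_\mu T_{d(\ttt)}$ and using the identity just established for $m_\mu$ places $\sigma(m_{\sss\ttt})$ in $\cH \cdot m_\mu u_\mu \cdot \cH \subseteq \bar N^\la$, and $\sigma^2 = \mathrm{id}$ upgrades the inclusion to equality. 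For part (ii), $\sigma$ then descends to an involution of $\cH/\bar N^\la$, and the identity just proved gives $\sigma(z_\la) = z_\la u \in \cH z_\la = S^\la$, so $\sigma(S^\la) = \cH\, \sigma(z_\la) \subseteq S^\la$, with equality by involutivity.

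The main technical obstacle is the conversion from ``invertible factor on the left'' to ``invertible factor on the right'' in part (i), since the polynomial $A$ in the $L_m^{-1}$'s does not directly commute with the $T_i$'s appearing in $x_\la$. My plan sidesteps this cleanly by exploiting the $*$-self-adjointness of $m_\la$ noted in the paper combined with the commutativity of $\sigma$ and $^*$; otherwise one would have to verify a commutation relation between $A$ and $x_\la$ modulo $\bar N^\la$ by hand, which is considerably messier.
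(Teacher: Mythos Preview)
Your proof is correct and follows essentially the same line as the paper: both isolate $\sigma(x_\la)$ via the one-dimensional $q$-eigenspace property, handle the $L_m$-factors using their mutual commutativity and $\sigma(L_m)=L_m^{-1}$, and deduce preservation of $\bar N^\la$ from its description as the two-sided ideal $\sum_{\mu\rhd\la}\cH m_\mu\cH$. The only substantive difference is that the paper asserts the invertible factor appears on the right after a ``straightforward computation,'' whereas you first obtain the factor $A$ on the left and then move it to the right via the clean observation that $\sigma$ commutes with $^*$ and $m_\la^*=m_\la$; this is a nice touch that makes the side-switch explicit. One small remark: in your part~(ii), the claim $z_\la u\in \cH z_\la$ is justified precisely because you also have $\sigma(z_\la)=A z_\la$ from the left-factor identity, so you might state that explicitly rather than leave it implicit.
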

\begin{proof}

Let us first prove that $\sig(x_{\la})=q^s x_{\la}$ for some integer $s$. It is easy to observe from the definition of $x_{\la}$ that we can reduce the problem to the case $x=\sum_{w \in \mathfrak{S}_n} T_w$. Note that $T_w x=q^{l(w)}x$ for all $w \in \mathfrak{S}_n$ where $l(w)$ is the length of $w$. It is well known that $x$ is unique with such property, up to scaling (see, for example, \cite{DJ1}, section 3). Applying $\sig$, we obtain $T_w \sig(x)=q^{l(w)}\sig(x)$ which, combined with the uniqueness of $x$, gives $\sig(x)=Px$ where $P \in \mathbb{C}[q,q^{-1}]$.
Since $\sig$ is an involution, $P(q)P(q^{-1})=1$ which implies $P=q^{s}$ for some integer $s$. 

Since $m_{\la}=\prod_{k=1}^r (\prod_{m=1}^{a_k} (L_m-q_k)) x_{\la} \in \cH$, a straightforward computation taking into account the commutativity of $L_m$ for $1\leq m \leq n$ gives $\sig(m_{\la})=m_{\la}u$ where $u=q^t \prod_{k=1}^r L_k^{b_k} q_k^{-a_k}$ for some integers $t$, $a_k$, $b_k$, and $1\leq k \leq r$. Note that $u \in \cH$ is invertible. Finally, we note that $\bar{N}^{\la}=\sum_{\nu \rhd \la} \cH m_{\nu} \cH$, since it is a two-sided ideal in $\cH$. Hence, $\sig$ preserves $\bar{N}^{\la}$, and also $S^{\la}$.
\end{proof}

\begin{proposition} \label{form} If the complex parameters $\ul{q}$ are on the  unit circle, then any irreducible representation $D^{\la} \neq 0$ has a nondegenerate 
Hermitian form such that 
\begin{equation} \label{invar}
(T v, v')=(v,\sigma(T^*)v')
\end{equation}
for all $v, v' \in D^{\la}$ and $T \in \cH$.
\end{proposition}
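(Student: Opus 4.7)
The plan is to transfer the symmetric bilinear form $\langle,\rangle$ on $S^{\la}$ to a sesquilinear form via the involution $\sigma$, descend to the quotient $D^{\la}$, and then rescale so that it becomes Hermitian. By Lemma \ref{lemma}, $\sigma$ restricts to a conjugate-linear involution of the vector space $S^{\la}$ satisfying $\sigma(Tv)=\sigma(T)\sigma(v)$. I would define
\[
(v,w) := \langle v,\sigma(w)\rangle,\qquad v,w\in S^{\la}.
\]
This is linear in $v$ and antilinear in $w$ because $\langle,\rangle$ is bilinear and $\sigma$ is antilinear. The invariance \eqref{invar} is a short computation using $\sigma^2=\mathrm{id}$, the $*$-invariance of $\langle,\rangle$, and the readily verified fact that $\sigma$ and $*$ commute on $\cH$ (both fix $q$ to $q^{\pm 1}$ and $T_i$ to $T_i^{\pm 1}$):
\[
(Tv,w)=\langle Tv,\sigma(w)\rangle=\langle v,T^{*}\sigma(w)\rangle=\langle v,\sigma(\sigma(T^{*})w)\rangle=(v,\sigma(T^{*})w).
\]

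Next I would show the form descends to a nondegenerate form on $D^{\la}=S^{\la}/R$, where $R=\mathrm{rad}\langle,\rangle$. Because $D^{\la}$ is simple, $R$ is the unique maximal proper $\cH$-submodule of $S^{\la}$; since $\sigma$ is a semilinear algebra automorphism preserving $S^{\la}$, $\sigma(R)$ is another proper submodule of the same $\mathbb{C}$-dimension as $R$, forcing $\sigma(R)=R$. Thus if $w\in R$ then $\sigma(w)\in R$ and $(v,w)=0$, while $v\in R$ gives $(v,w)=0$ directly, so $(,)$ descends. Nondegeneracy on $D^{\la}$ follows because $(v,w)=0$ for all $w$ is equivalent (via $u=\sigma(w)$) to $\langle v,u\rangle=0$ for all $u\in S^{\la}$, that is, $v\in R$.

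The main obstacle is Hermitian symmetry, which does not come for free from the construction. To obtain it, I would argue by Schur's lemma: the form $(v,w)':=\overline{(w,v)}$ is also sesquilinear, nondegenerate, and satisfies \eqref{invar} (using once more that $\sigma$ and $*$ commute on $\cH$). Since invariant sesquilinear forms on the finite-dimensional simple $\cH$-module $D^{\la}$ correspond to $\cH$-linear maps into its $(\sigma\circ *)$-twisted antilinear dual, Schur forces $(,)'=c\,(,)$ for some $c\in\mathbb{C}^{\times}$. Applying the construction twice gives $((,)')'=(,)$, hence $|c|=1$. Writing $c=e^{i\theta}$ and rescaling $(,)$ by $e^{i\theta/2}$ produces a nondegenerate, $\cH$-invariant, Hermitian form on $D^{\la}$, as required.
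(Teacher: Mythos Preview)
Your proof is correct and follows essentially the same route as the paper: define the sesquilinear form $(v,w)=\langle v,\sigma(w)\rangle$ from the Dipper--James--Mathas bilinear form via the semilinear involution of Lemma~\ref{lemma}, descend to $D^{\la}$, use uniqueness (Schur) to compare it with its conjugate-transpose and obtain a unit-modulus scalar, then rescale by a square root to achieve Hermitian symmetry. You are simply more explicit than the paper in verifying the invariance identity, in justifying $\sigma(R)=R$ so that the form descends, and in spelling out the Schur-lemma step; the paper asserts these points more tersely but the argument is the same.
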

\begin{note} For any $0 \leq i \leq n-1$, $\sigma(T_i^*)=T_i^{-1}$. Moreover, $\sigma(T^*)=T^{-1}$ for any $T$ from the group generated by elements $T_i$.
\end{note}

\begin{proof}
First, let us recall the existence of a symmetric bilinear form $\langle,\rangle$ on $S^{\la}$  such that $\langle T v, v' \rangle=\langle v,T^* v' \rangle$ for any $v,v' \in D^{\la}$ and $T \in \cH$. This form descends to a nondegenerate form on $D^{\la}$.

Let us note that there is a semilinear involution on the Specht module $S^{\la}$, which by abuse of notation we will also denote by $\sig$, such that  $\sig(T) \sig(v)=\sig(Tv)$ for any $T \in \cH$ and $v \in S^{\la}$. On $S^{\la}$, we construct a new form $\langle v,w\rangle_1=\langle v,\sig(w)\rangle$. Clearly, the form $\langle, \rangle_1$ is sesquilinear and descends to a nondenegerate form on $D^{\la}$. Since the form $\langle, \rangle_1$ is unique up to rescaling, it follows that $\langle v,v' \rangle_1=c \overline{\langle v', v \rangle}_1$ for all $v, v' \in D^{\la}$ and a complex constant $c$. Note that $c$ must be on the unit circle, hence $c=exp(i\theta)$ for some $\theta \in \RR$. Define now $(v, v')=\alpha \langle v, v' \rangle_1$ where $\alpha=exp(i \frac{\theta}{2})$. We conclude immediately that $(,)$ is Hermitian.
 \end{proof}

We end this section with the following
\begin{definition}
Given a multipartition $\la$ such that $D^{\la} \neq 0$, we say the   $D^{\la}$ is \emph{unitary} if the nondegenerate Hermitian form defined above  can be normalized to be positive definite. 
\end{definition}

\section{Unitary representations in type A}

The rest of the paper will focus on the Hecke algebra of the symmetric group $\mathfrak{S}_n$. In this case, the Hecke algebra depends only on
one parameter $q$. For a complex $q$, let $e$ be the smallest positive integer such that $1+q+\ldots +q^{e-1}=0$. If such an integer does not exist, we set $e=\infty$.

The irreducible representations $D^{\la}$ of $\cH_{q}$ are indexed by partitions of $n$. As it is well known, $D^{\la} \neq 0$ if and only if $\la^{\vee}$ is $e$-restricted where $\la^{\vee}$ is the conjugate partition. In other words, $\la^{\vee}_i - \la^{\vee}_{i+1}<e$ for all $i$ . In this section, we will denote $D_{\la} \colon=D^{\la^{\vee}}$.
Hence, $D_{\la} \neq 0$ if and only if $\la$ is $e$-restricted. Similarly, we will denote $S_{\la}=S^{\la^{\vee}}$. As mentioned in the previous section, $D_{\la}=S_{\la}/rad(,)$.

We define the unitary locus of $\la$ to be the set $$U(\la)=\{c \in (-\frac{1}{2},\frac{1}{2}] | D_{\la} \text{ is nonzero and unitary at } q=exp(2 \pi i c)  \}. $$

\begin{proposition} \label{hook} (i) For any $n \geq 2$,  $U(1^n)=(-\frac{1}{2},\frac{1}{2}]$ and $$U(n)=(-\frac{1}{2},\frac{1}{2}] \setminus \{\pm\frac{r}{m} | 1 \leq r,m \leq n, gcd(r,m)=1\}.$$

(ii) If $n \geq 3$ and $\la=(n-k,1^k)$ for $1\leq k \leq n-2$, then  $U(\la)=[-\frac{1}{n},\frac{1}{n}].$
\end{proposition}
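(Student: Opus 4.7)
Part (i) reduces to identifying the two representations as one-dimensional. $D_{(1^n)} = S^{(n)}$ is the representation on which each $T_i$ acts as $q$: it is always irreducible since $(1^n)$ is $e$-restricted for every $e$, and being one-dimensional with $|q|=1$ it is automatically unitary. Similarly $D_{(n)} = S^{(1^n)}$ is the representation $T_i \mapsto -1$, nonzero iff $(n)$ is $e$-restricted, i.e., $e > n$. Writing $q = e^{2\pi i c}$, the multiplicative order of $q$ is the reduced denominator of $c$ (or $\infty$ at $c=0$), so $D_{(n)} \neq 0$ precisely when $c$ avoids the stated set of rationals with denominator at most $n$; one-dimensionality again gives unitarity for free.

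For the inclusion $[-1/n,1/n] \subseteq U((n-k,1^k))$ in part (ii), the plan is a continuity argument anchored at $c=0$, where $\cH$ specializes to $\mathbb{C}\mathfrak{S}_n$ and every irreducible representation is unitary. For $c \in (-1/n,1/n)$ the multiplicative order of $q$ exceeds $n$, so $\cH$ is semisimple, $S_\lambda = D_\lambda$ has fixed dimension $\binom{n-1}{k}$, and the Hermitian form from Proposition \ref{form} is nondegenerate and varies continuously in $c$. Its signature is a discrete invariant, so the form remains positive definite throughout the open interval. At $c = \pm 1/n$ the algebra may fail to be semisimple, but the Hermitian form on $S_\lambda$ is the pointwise limit of positive definite forms and hence positive semidefinite; the nondegenerate quotient on $D_\lambda = S_\lambda/\mathrm{rad}$ inherits positivity.

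For the reverse inclusion, my plan is to compute Hermitian norms explicitly in the seminormal basis $\{v_t\}$ of $S^{(k+1,1^{n-k-1})}$ on the semisimple locus and to isolate the first sign change as $|c|$ grows past $1/n$. Combining the seminormal action formulas for $T_i$ with braid invariance $(Tv,w)=(v,T^{-1}w)$ and the identity $\overline{[m]_q} = q^{1-m}[m]_q$ valid at $|q|=1$, the phases cancel in the Hermitian norm ratio of two adjacent standard tableaux $t$ and $s_i t$, reducing it to a quotient of real sines in the axial distance between the boxes of $i$ and $i+1$ in $t$. Telescoping along a path between the ``column-first'' and ``row-first'' extremal tableaux of the hook, the product collapses to a ratio of sines in which the factor $\sin(\pi n c)$ appears, coming from the corner hook length $n$ of the diagram $(k+1,1^{n-k-1})$. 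Since at $q = e^{2\pi i c}$ the only $q$-integer $[m]_q$ that vanishes as $|c|$ first crosses $1/n$ from inside is $[n]_q$, this factor controls the first sign flip; the form on $S_\lambda$ acquires a negative eigenvalue for $c$ just outside $[-1/n,1/n]$, and by continuity together with a signature count this negative part descends to any nonzero irreducible quotient $D_\lambda$, giving non-unitarity. The hardest step will be the combinatorial bookkeeping needed to uniformly isolate $\sin(\pi n c)$ as the controlling factor for all middle hooks $1 \leq k \leq n-2$, and the verification at non-semisimple parameters beyond $\pm 1/n$ that the negative eigenvalue persists on $D_\lambda$ rather than being absorbed into the radical.
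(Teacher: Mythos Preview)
Your treatment of part (i) and of the inclusion $[-1/n,1/n]\subseteq U(\la)$ in part (ii) matches the paper's: both are one-line observations about one-dimensionality and about continuity of a nondegenerate form from $c=0$.

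For the reverse inclusion $U(\la)\subseteq[-1/n,1/n]$ you take a different route from the paper. You propose a direct seminormal-basis computation on $S_\la$, isolating the factor $\sin(\pi n c)$ that flips sign as $|c|$ crosses $1/n$. This is a reasonable strategy, but as written it only establishes indefiniteness of the form on $S_\la$ for $c$ \emph{just} outside $\pm 1/n$. Two things are then left open, and you flag both without resolving them: (a) for $|c|$ further out, the semisimple locus in $(1/n,1/2]$ breaks into many intervals separated by points $r/m$ with $m\le n$, and the signature of the form on $S_\la$ must be tracked across each of these walls, not just the first; (b) at each non-semisimple point the radical could in principle swallow the negative part, so indefiniteness on $S_\la$ does not automatically give indefiniteness on $D_\la$. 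Your suggested ``continuity together with a signature count'' does not settle (b), because $\dim D_\la$ jumps at those points.

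The paper sidesteps both difficulties with a single structural move: induction on $n$ via the restriction functor $\cH(\mathfrak{S}_{n-1})\subset\cH(\mathfrak{S}_n)$. Unitarity of $D_\la$ forces unitarity of an $e$-restricted hook $D_\nu$ for $\mathfrak{S}_{n-1}$ appearing in $\mathrm{Res}\,D_\la$, which by induction pins $c$ down to $[-1/(n-1),1/(n-1)]$. Only the narrow strip $\pm(1/n,1/(n-1)]$ remains, and there $S_\la$ is already irreducible (its $e$-core is itself, since the hook $(n-k,1^k)$ has no cell of hook length $n-1$ when $1\le k\le n-2$), so $D_\la=S_\la$ and there is no radical to worry about. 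A single Jantzen-filtration/determinant argument at $c=\pm 1/n$ then shows the signature changes there, finishing the proof. Your seminormal computation could replace that last step, but the restriction-plus-induction reduction is the missing idea that makes the global statement tractable.
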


\begin{proof} 
Given $c \in (-\frac{1}{2},\frac{1}{2}]$, we set $q=exp(2\pi i c)$ and $e$ the smallest positive integer such that $1+q+\ldots+q^{e-1}=0$.

If $\la=(n)$ is $e$-restricted, then $D_{(n)} \neq 0$ is one-dimensional and, therefore, unitary. Similarly, $D_{(1^n)}$ is one-dimensional and unitary.

Let us, now, assume that $n \geq 3$ and $\la=(n-k,1^k)$ for some $1\leq k \leq n-2$. The form on $S_{\la}$ is nondegenerate, if $c \in (-\frac{1}{n},\frac{1}{n})$, and positive definite at $c=0$, hence $(-\frac{1}{n},\frac{1}{n}) \subset U(\la)$. At $c=\pm\frac{1}{n}$, the form is positive definite on the quotient $D_{\la}=S_{\la}/rad(,)$, therefore $[-\frac{1}{n},\frac{1}{n}] \subset U(\la)$.

We will use induction on $n$ and the restriction functor $Res$ given by the natural inclusion $\cH(\mathfrak{S}_{n-1})\subset \cH(\mathfrak{S}_n)$ to prove that $U(\la) \subset [-\frac{1}{n},\frac{1}{n}]$. 

First, we will use the induction hypothesis to show that $U(\la) \subset [-\frac{1}{n-1},\frac{1}{n-1}]$. For $n=3$, this is automatic, so we may assume $n>3$. Let $c \in U(\la)$ and assume $\la$ is $e-$restricted. At least one of the partitions $(n-k-1,1^k)$ and $(n-k,1^{k-1})$ is different from $(n-1)$ and $(1^{n-1})$. Let us denote one such partition by $\nu$. We observe that $D_{\nu} \neq 0$, since $\nu$ is $e$-restricted. Using Theorem 2.5 in \cite{Br}, the module $D_{\nu}$ is contained in the composition series of $Res D_{\la}$. Since $D_{\la}$ is unitary, so is $D_{\nu}$. By the induction hypothesis, $c \in [-\frac{1}{n-1},\frac{1}{n-1}]$.

We will now show that $D_{\la}$ is not unitary for  $ c\in [-\frac{1}{n-1},-\frac{1}{n}) \cup (\frac{1}{n},\frac{1}{n-1}]$.
Using an argument similar to that in \ref{form}, we can define a Hermitian form
$(,)$ on the universal Specht module $\tilde{S}_{\la}$ for the universal Hecke algebra. It will be related to the symmetric form $\langle, \rangle$ defined in \cite{DJ1,DJ2} by  $(v,v')=q^{-k/2}\langle v,\sigma(v') \rangle$ for any $v,v' \in \tilde{S}_{\la}$ and  a fixed integer $k$ depending on $\la$. Specializing at generic complex values of $q$, we obtain the Hermitian form on $S_{\la}$, up to rescaling. Since for $c \in (-\frac{1}{n},\frac{1}{n})$ the module $S_{\la}$ is irreducible and its form is nondegenerate, we can normalize the form on $\tilde{S}_{\la}$, such that it is positive definite upon specialization in this interval. 

Note that $S_{\la}$ is irreducible for any $ c\in \pm(\frac{1}{n},\frac{1}{n-1}]$ because the $e$-core of $\la$ is itself (see \cite{DJ2}, Theorem 4.13 for details). Hence, the signature of the form on $S_{\la}$ does not change when $ c \in \pm(\frac{1}{n},\frac{1}{n-1}]$, because the form on $\tilde{S}_{\la}$ does not degenerate when specialized in this interval. It suffices to prove that this form is not positive definite upon specialization in a small neighborhood $\pm(\frac{1}{n},\frac{1}{n}+\epsilon)$. Let us look at the signature of the form in this interval. Let $\zeta=e^{2\pi i/n}$ and $M_i=\{v \in \tilde{S}_{\la} | (q-\zeta)^i \text{ divides } (v,-)\}$. The Jantzen filtration of the Hermitian form given by $\tilde{S}_{\la}=M_0 \supset M_1 \supset \ldots $ is the same as for the symmetric form $\langle, \rangle$.  Looking at the determinant of the latter computed in \cite{DJ2} Theorem 4.11, we observe that it has a root $\zeta$ of multiplicity $n-2 \choose k$. This equals the dimension of $D_{\mu}$ at $c=\pm \frac{1}{n}$ where $\mu=(n-k-1,1^{k+1})$. At $c=\pm \frac{1}{n}$, it is known that $S_{\la}$ has length 2, its socle is $D_{\mu}$, and its head $D_{\la}$. Hence, specializing at $c=\pm \frac{1}{n}$, the Jantzen filtration becomes $0=M_2 \subsetneq M_1=D_{\mu} \subsetneq S_{\la}$. Therefore, the signature of the form changes when crossing 
$c=\pm \frac{1}{n}$, without becoming negative definite. We conclude that the form ceases to be positive definite in the interval $\pm(\frac{1}{n},\frac{1}{n}+\epsilon)$, which finishes the proof.
\end{proof}

We will now state and prove the main theorem.
Let $\la$ be a partition, $L$ its largest hook, and $b$ the multiplicity
of the largest part. If $\la$ is not rectangular with $b>1$, let $l=L-b+1$, otherwise, let $l=L-b+2$.  For any $\la$, we will call the integers $l \leq k \leq L$ the \emph{main hooks} of $\la$. Note that if $k$ is a main hook of $\la \neq (n)$, then $\la$ is $k$-restricted.

\begin{theorem} \label{Main}
If $\la \neq (n), (1^n)$, the unitary locus is given by
$U(\la)=(-\frac{1}{L},\frac{1}{L}) \cup \{\pm \frac{1}{k} |\text{ } k  \text{ is a main hook of } \la\}$.

\end{theorem}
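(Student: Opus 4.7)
The plan is to induct on $n$ using the restriction functor $\mathrm{Res}\colon\cH(\mathfrak{S}_n)\text{-mod}\to\cH(\mathfrak{S}_{n-1})\text{-mod}$ coming from the inclusion $\cH(\mathfrak{S}_{n-1})\subset\cH(\mathfrak{S}_n)$, exactly in the spirit of Proposition~\ref{hook}. The hook shapes, together with the trivial and sign representations already treated in Proposition~\ref{hook}, serve as the base of the induction, together with direct verification for small $n$. For a general $\lambda\neq(n),(1^n)$, I would prove the two inclusions in the claimed equality separately.

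For the forward inclusion, I would first note that on $(-\frac{1}{L},\frac{1}{L})$ one has $e>L$, so no hook of $\lambda$ is divisible by $e$; hence $S_\lambda=D_\lambda$ is irreducible and the Hermitian form obtained by specializing the universal form on $\tilde{S}_\lambda$ (constructed as in Proposition~\ref{hook}) varies continuously in $c$ and stays nondegenerate on the whole interval. Positive-definiteness at $c=0$ (the symmetric-group specialization) then propagates by continuity. For the discrete boundary points $\pm\frac{1}{k}$ with $k$ a main hook, I would analyze the Jantzen filtration of $\tilde{S}_\lambda$ at $q=e^{\pm 2\pi i/k}$ using the Gram-determinant formula of \cite{DJ2}~Theorem~4.11 and the explicit composition factors of $S_\lambda$ at these specializations; the aim is to show that the radical of the form is exactly the maximal submodule, so the induced form on $D_\lambda$ is nondegenerate, and then to verify by a signature-change computation that it remains positive definite.

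For the reverse inclusion, take $c\in U(\lambda)$ and, for each corner of $\lambda$ whose removal yields an $e$-restricted partition $\nu$, invoke Brundan's Theorem~2.5 of \cite{Br} to place $D_\nu$ in the composition series of $\mathrm{Res}\,D_\lambda$. Unitarity of $D_\lambda$ forces unitarity of every such $D_\nu$, so $c\in U(\nu)$, and the induction hypothesis gives an explicit list of possibilities for $c$. I would then choose several distinct removable corners so that the intersection of the resulting constraints collapses to $(-\frac{1}{L},\frac{1}{L})\cup\{\pm\frac{1}{k}\mid k\text{ main hook of }\lambda\}$. In the non-rectangular case, this is essentially the argument of Proposition~\ref{hook}, picking a $\nu$ that preserves the largest hook; in the rectangular case with $b>1$, the key is to remove a corner producing a non-rectangular $\nu$ whose own set of main hooks coincides with $\{l,l+1,\ldots,L\}$ of $\lambda$.

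The hard part will be the rectangular case, in which $\lambda$ has several main hooks and the claim demands simultaneously proving unitarity at every $\pm\frac{1}{k}$ for $l\leq k\leq L$ while excluding every intermediate value. Establishing non-unitarity immediately past each $\pm\frac{1}{k}$ requires a delicate signature-change argument parallel to the one in Proposition~\ref{hook}, based on precise information about the Jantzen filtration of $S_\lambda$ at these roots of unity; the combinatorial bookkeeping of how hook lengths transform under corner removal, and the identification of the correct witnessing $\nu$ in each sub-case, are the most subtle pieces, and I expect they will force careful case analysis according to whether $\lambda$, or its predecessor $\nu$, is rectangular.
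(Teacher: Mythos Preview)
Your plan for the containment $U(\lambda)\subset(-\tfrac{1}{L},\tfrac{1}{L})\cup\{\pm\tfrac{1}{k}\}$ via restriction, Brundan's branching theorems, and induction on $n$ is exactly the paper's approach, and the continuity argument for $(-\tfrac{1}{L},\tfrac{1}{L})\subset U(\lambda)$ is also the same. Incidentally, beyond the hook base case the paper does \emph{not} use further signature-change arguments to exclude points; once Proposition~\ref{hook} is in hand, the exclusion is done purely by restriction and induction, so your anticipated ``delicate signature-change argument'' for the upper bound in the rectangular case is not needed.

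The genuine gap is in the opposite inclusion, at the isolated points. For a main hook $k$ with $l\le k<L$, the value $\pm\tfrac{1}{k}$ lies strictly outside $[-\tfrac{1}{L},\tfrac{1}{L}]$ and is an \emph{isolated} point of $U(\lambda)$, flanked on both sides by $c$ for which $D_\lambda$ is not unitary (already for $\lambda=(2,2,1)$ the point $\pm\tfrac{1}{3}$ is isolated). A Jantzen/signature-change analysis is well suited to proving \emph{non}-unitarity just past a wall, as in Proposition~\ref{hook}, but it does not establish positive-definiteness of the quotient form at an isolated point: you would have to show that, after crossing possibly several walls from the positive-definite interval, the accumulated negative part of the form on $S_\lambda$ coincides \emph{exactly} with the radical at $c=\tfrac{1}{k}$. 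Your proposal offers no mechanism for this, and the Gram-determinant formula alone does not supply it.

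The paper obtains this positivity by a completely different, external input that your plan is missing. For each nonnegative $k$ it realizes $D_\lambda$ at $q=\exp(\pm 2\pi i/(l+k))$ as a multiplicity space in $V^{\otimes n}$ inside the \emph{unitary} fusion category $\mathcal{C}_Q$ of $U_Q(\mathfrak{sl}_N)$ at the root of unity $Q=\exp(\pm\pi i/(a+N))$, with $N=\lambda_1$ and $a=l-N+k$, and appeals to quantum Schur--Weyl duality. Unitarity of $\mathcal{C}_Q$ (Kirillov, Wenzl) then forces the braid-group, hence Hecke, action on every multiplicity space to be unitary, which gives $\pm\tfrac{1}{l+k}\in U(\lambda)$ directly. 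Without this tensor-category argument (or a substitute of comparable strength), the inclusion $\{\pm\tfrac{1}{k}:k\text{ a main hook}\}\subset U(\lambda)$ remains unproved.
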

\begin{proof}
The proof will be contained in the following two propositions.
\end{proof}

\begin{proposition}
If $\la \neq (n), (1^n)$, then $U(\la)$ is contained in the set $(-\frac{1}{L},\frac{1}{L}) \cup \{\pm \frac{1}{k} |  \text{ } 
k \text{ is a main hook}\}$.

\end{proposition}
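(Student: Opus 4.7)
My plan is to induct on $n=|\lambda|$, using the restriction functor $\mathrm{Res}$ from $\cH(\mathfrak{S}_n)$ to $\cH(\mathfrak{S}_{n-1})$ together with Brundan's modular branching rule to transport unitarity to smaller partitions, and a Jantzen filtration argument modeled on the end of the proof of Proposition \ref{hook} to rule out the remaining $c$ with $|c|>1/L$ that are not main-hook reciprocals.

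The base cases (hook shapes) are supplied by Proposition \ref{hook}. For the inductive step, fix a non-hook $\lambda\vdash n$, let $c\in U(\lambda)$, and set $q=e^{2\pi i c}$. By Theorem 2.5 of \cite{Br}, every composition factor $D_\mu$ of $\mathrm{Res}\,D_\lambda$ is itself unitary, so $c\in U(\mu)$ for each such $\mu$, and the inductive hypothesis yields
\[
c\in(-\tfrac{1}{L(\mu)},\tfrac{1}{L(\mu)})\cup\{\pm\tfrac{1}{k}:k\text{ is a main hook of }\mu\}.
\]
I would select the $\mu$'s so as to sharpen this constraint. When $b=1$, removing the corner $(1,\lambda_1)$ gives $\mu$ with $L(\mu)=L-1$; when $\lambda_\ell=1$, removing $(\ell,1)$ also drops $L$; iterating restriction through such partitions traps $c$ inside $[-\tfrac{1}{L},\tfrac{1}{L}]$ together with the relevant isolated reciprocals. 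In the rectangular $b>1$ case the unique removable corner gives $\mu$ with the same $L$ but multiplicity $b-1$, and the shifted main-hook range $[L-b+2,L]$ comes out directly from the induction hypothesis applied to $\mu$, consistent with the special definition of $l$ in that case.

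A final Jantzen filtration signature argument, patterned on the end of the proof of Proposition \ref{hook}, handles any leftover $c$ in the open intervals between consecutive main-hook reciprocals by tracking sign changes of the Hermitian form on the universal Specht module $\tilde S_\lambda$. One uses the DJ determinant formula (\cite{DJ2} Theorem 4.11) to locate the roots $\zeta=e^{2\pi i/k}$ and their multiplicities, identifies the appropriate Jantzen layer as the socle of $S_\lambda$ at that specialization, and deduces that the form fails to be positive definite just past each forbidden $\pm 1/k$. The main obstacle I foresee is combinatorial: Brundan's composition factors of $\mathrm{Res}\,D_\lambda$ depend on the $e$-residues of removable nodes and so vary with $c$, so choosing $\mu$'s that simultaneously sharpen the restriction bound and control the signature analysis across each bad interval requires careful case analysis on the shape of $\lambda$, particularly to eliminate isolated points $\pm 1/k$ for $k<l$ that are not automatically ruled out by dimension or $e$-restrictedness.
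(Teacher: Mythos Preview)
Your overall plan---induct on $n$ using restriction to $\cH(\mathfrak{S}_{n-1})$ and Brundan's branching rules---is the same as the paper's. The divergence, and the gap, is in the execution.

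The paper does \emph{not} use any Jantzen filtration or signature argument for non-hook $\lambda$; that machinery appears only in Proposition \ref{hook}. Instead the paper splits into the cases $e>L$ and $2\le e\le L$. When $e>L$ every $S_\nu$ with $\nu\to\lambda$ is irreducible, so one may remove a corner off the largest hook (such a corner exists since $\lambda$ is not a hook) and conclude by induction that $c\in(-\tfrac{1}{L},\tfrac{1}{L})$. When $2\le e\le L$ the paper analyzes the positions of \emph{good} and \emph{normal} corners relative to the largest hook, using both Theorem 2.5 and Theorem 2.6 of \cite{Br}, and in every subcase reduces to the induction hypothesis for some smaller $\nu$ whose main-hook range forces $r=\pm1$ and $l\le e\le L$. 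No determinant formula or Jantzen layer computation is needed.

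Your proposal, by contrast, is too coarse on the restriction side and then tries to compensate with a Jantzen argument that you leave as a sketch. Specifically: you propose to remove the corner $(1,\lambda_1)$ or $(\ell,1)$, but whether the resulting $D_\mu$ actually occurs in $\mathrm{Res}\,D_\lambda$ depends on the $e$-residue of that node, which you do not control; so ``iterating restriction through such partitions'' does not by itself trap $c$. And the Jantzen step you defer to is substantially harder for general $\lambda$ than for hooks: at $q=e^{2\pi i/k}$ the Specht module $S_\lambda$ typically has length greater than $2$, so one cannot simply read off the signature change from a single socle layer as in Proposition \ref{hook}. The ``careful case analysis'' you anticipate is exactly what the paper carries out---but on the good/normal corners, not on Jantzen layers---and once that is done the Jantzen step is superfluous.
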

\begin{proof}
Let us assume $n \geq 4$ and $\la \neq (n-k,1^k)$ for all $0 \leq k \leq n-1$, otherwise the result follows from \ref{hook}. We will use the restriction functor coming from the natural inclusion  $\cH(\mathfrak{S}_{n-1}) \subset \cH(\mathfrak{S}_n)$. Let $c \in U(\la)$, $q=exp(2\pi i c)$ and $e$ the smallest positive integer such that $1+q+\ldots+q^{e-1}=0$. 

First, assume that $e>L$. We will prove that $c \in (-\frac{1}{L},\frac{1}{L})$. Using the restriction functor, we obtain 
$Res S_{\la}=\oplus_{\nu \rightarrow \la} S_{\nu}$ where the arrow indicates the addition of a box. Let $\nu$ be a partition in this sum obtained by removing a box from $\la$ situated away from the largest hook. Since $S_{\la}$ is unitary, so is $S_{\nu}$. The largest hook of $\nu$ is $L$, hence $c \in (-\frac{1}{L},\frac{1}{L})$ by the induction step.

Let us now assume that $2 \leq e \leq L$, $c=\frac{r}{e}$ for some integer $r$ relatively prime to $e$. Note that $\la$ is $e$-restricted. 
From Theorem 2.6 in \cite{Br}, the socle of the restriction of $D_{\la}$ to $\cH(\mathfrak{S}_{n-1})$ is
$$Soc(Res D_{\la})=\oplus_{\substack{\mu \xrightarrow{good} \la}} \mu$$ where the arrow indicates the addition of a \emph{good} box (see \cite{Br} for details).
For all partitions $\mu$ in the sum, $D_{\mu} \neq 0$ and unitary. 

We distinguish the following cases. First, we assume there exists a good corner
not situated on the largest hook of $\la$. Let $\nu$ be the partition obtained by removing this corner. Applying the induction hypothesis to $D_{\nu}$, we conclude that $l \leq e \leq L$ and $r=\pm 1$.

Assume, now, that all good corners are on the largest hook. 
If there is a good corner on the first column, we remove it and obtain a partition $\nu$. Applying the induction hypothesis, we obtain $r=\pm 1$ and $l-1 \leq e \leq L$. Assuming $e=l-1$, we note that the highest corner of $\la$ is good, hence it must be on the largest hook. In addition, $\la$ has no corners outside the largest hook, because they would be good. Hence, $\la$ is of the form $(n-k,1^k)$, a contradiction. Therefore, $l \leq e \leq L$.

Assume, now, there is a good corner only on the first column. By removing this corner, we obtain a partition $\nu$ such that $D_{\nu}$ is unitary. Hence $r=\pm 1$ and $a \leq e \leq L$ where $a$ is the smallest main hook of $\nu$. Assume, first, that the highest corner of $\nu$ is also a corner for $\la$. This must be a \emph{normal} corner of $\la$ (see \cite{Br} for the definition). It is not situated on the largest hook of $\la$, otherwise, $\la=(2,1^{n-1})$. Let us remove this corner and obtain a partition $\nu'$. Note that $D_{\nu'} \neq 0$. Using Theorem 2.5 in \cite{Br}, $D_{\nu'}$ belongs to the composition series of $Res D_{\la}$, hence it is unitary. Applying the induction hypothesis, we obtain the desired result.

Finally, if the highest corner of $\nu$ is not a corner of $\la$, then $\la_1-\la_2 \geq 2$ and $L-1 \leq e \leq L$. Then there is no corner outside of the largest hook, because it would be good. Hence,  $\la$ is of the form $(n-k,1^k)$, a contradiction.  This finishes the proof.
\end{proof}
\begin{proposition}
If $\la \neq (n), (1^n)$, then $(-\frac{1}{L},\frac{1}{L}) \cup \{\pm \frac{1}{k} | \text{ } 
k \text{ is a main hook}\}$ is contained in the unitary locus $U(\la)$.
\end{proposition}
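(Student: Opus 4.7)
My plan is to establish positive-definiteness of the invariant Hermitian form on $D_\la$ at each point of the claimed set, splitting the argument into the open interval and the main-hook points.

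First I would handle $(-\frac{1}{L},\frac{1}{L})$. Throughout this interval $e>L$, so $\la$ admits no rim hook of length $e$ and its $e$-core equals $\la$ itself; by Theorem 4.13 of \cite{DJ2} this implies that $S_\la$ is irreducible and the invariant form nondegenerate. Working on the universal Specht module $\tilde{S}_\la$ with formal parameter, as in the proof of Proposition \ref{hook}, the Hermitian form on $S_\la$ depends continuously on $c$. At $c=0$ the Hecke algebra is $\mathbb{C}[\mathfrak{S}_n]$, so $S_\la$ carries the classical positive-definite Young form. Because the eigenvalues of the Gram matrix are continuous in $c$, nowhere vanish on this interval, and are strictly positive at $c=0$, they remain strictly positive throughout the interval.

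For the main-hook points $c=\pm 1/k$ I would use a Jantzen-filtration argument parallel to Proposition \ref{hook}. When $k=L$ the argument is almost immediate: the eigenvalues at $\pm 1/L$ are nonnegative by continuity from the open interval, the zero-eigenspace coincides with the radical of the form, and so $D_\la = S_\la/\mathrm{rad}$ inherits positive-definiteness. When $l\leq k<L$ the continuity argument fails on $S_\la$ (the form will typically have acquired negative eigenvalues once $c$ crosses $\pm 1/L$), so one must work on $D_\la$ directly. The approach is to compute the Jantzen filtration $\tilde{S}_\la\supseteq M_1\supseteq M_2\supseteq\cdots$ at $\zeta=e^{\pm 2\pi i/k}$ using the determinant formula of \cite{DJ2} Theorem 4.11, and to exploit the fact that the ``main hook'' condition controls which partitions $\mu$ other than $\la$ contribute to the composition series of $S_\la$ at this specialization. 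The relevant $\mu$ should arise from box-moves along a column rim hook of length $k$ and be themselves of main-hook type but with a strictly smaller largest hook, opening an induction on $n$ (via the restriction functor of \cite{Br}) in which $D_\mu$ is known to be unitary at $c=\pm 1/k$.

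The main obstacle will be this last step: precisely matching the Jantzen filtration of $S_\la$ at interior main hooks with the composition series, and verifying that the negative-sign contributions of the form all live in $M_1$ so that $D_\la=S_\la/M_1$ retains a positive-definite form. The combinatorial role of the main hooks---namely, that the box-moves they induce produce inductively tractable subquotients---is what makes this approach viable, but it will require a delicate sign-tracking computation analogous to the one in Proposition \ref{hook}, carried out simultaneously across all main hooks of $\la$.
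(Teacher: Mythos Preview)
Your treatment of the open interval $(-\tfrac{1}{L},\tfrac{1}{L})$ matches the paper, and your limiting argument for the endpoint $k=L$ is valid. The genuine gap is the interior main hooks $l\le k<L$. You yourself flag this as ``the main obstacle,'' and indeed it is not filled: you propose to track signs through the Jantzen filtration and hope that all negative eigenvalues fall into $M_1$, but you give no mechanism for verifying this. Unlike the hook case in Proposition~\ref{hook}, where $S_\la$ has length~$2$ at the relevant point and the filtration is $0\subsetneq M_1\subsetneq S_\la$, for a general $\la$ at $c=\pm 1/k$ with $k<L$ the Specht module can have a long composition series, the Jantzen layers need not be simple, and there is no reason the negative part of the signature should align exactly with the radical. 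Your suggested induction via restriction also runs backwards: restriction of a unitary module gives unitary subquotients, but you need the converse direction here, which restriction does not provide.

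The paper takes an entirely different route for the main-hook points, bypassing Jantzen filtrations and sign-tracking altogether. It invokes the fusion category $\mathcal{C}_Q$ of $U_Q(sl_N)$ at a primitive $2(a+N)$-th root of unity, which is known to be \emph{unitary} when $Q=\exp(\pm\pi i/(a+N))$. By quantum Schur--Weyl duality, $V^{\otimes n}=\bigoplus_{\mu:(\mu,\theta)\le a}\pi_\mu\otimes D_{\mu^\vee}$ as a $(\mathcal{C}_Q,\cH_{Q^2})$-bimodule, so each $D_{\mu^\vee}$ is a multiplicity space carrying a braid-group-invariant positive-definite form inherited from the unitary structure on $\mathcal{C}_Q$. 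Choosing $N$ to be the largest part of $\la$ and $a=l-N+k$ (which is positive), one finds that $\la=\mu^\vee$ appears in the decomposition precisely when $l+k=a+N$, giving unitarity of $D_\la$ at $q=\exp(\pm 2\pi i/(l+k))$ for every main hook $l+k$. This external input from tensor categories is what makes the argument work; a purely internal Jantzen approach of the kind you sketch is not known to succeed here.
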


\begin{proof}
First note that $(-\frac{1}{L},\frac{1}{L}) \in U(\la)$, since the form on $S_{\la}$ is nondegenerate on this interval and it is positive definite at $c=0$. We will now prove that $D_{\la}$ is unitary at $\pm \frac{1}{l+k}$ for all nonnegative integers $k$ where $l$ is the smallest main hook. 

Let $N \geq 2$ and $a$ be positive integers
and $Q$ a primitive root of unity of order $2(a+N)$.
Then one can define the fusion category $\mathcal{C}_Q$ of representations of the quantum group
$U_Q(sl_N)$. It is a modular tensor category whose simple objects
are highest weight representations with dominant integral highest weight 
$\mu$ such that $(\mu,\theta)\leq a$ where $\theta=(1,0,...,0,-1)$ is the highest root. For details, we refer to \cite{BK} and the references therein.

The category $\mathcal{C}_Q$ is unitary for $Q=exp(\pm \pi i/(a+N))$ and Hermitian for any $Q$. For this fact, and the general notions of Hermitian and unitary categories, see \cite{K1,K2} and the references therein. Since $\mathcal{C}_Q$ is unitary for such $Q$, the braid group representations on 
multiplicity spaces of tensor powers of an object are also unitary. In 
particular, if $V$ is the vector representation, this is true for the power $V^{\otimes n}$.

Now, the braid group representations  on  multiplicity spaces of this 
tensor power  factor through the Hecke algebra $\cH_{Q^2}$.
From the quantum Schur-Weyl duality we obtain $$V^{\otimes n}=\oplus_{\mu: (\mu,\theta) \leq a } \text{ }\pi_\mu\otimes D_{\mu^\vee}$$
 where $\pi_\mu$ are simple objects of $\mathcal{C}_Q$ 
and $D_{\mu^{\vee}}$ are irreducible representations of the Hecke algebra $\cH_{Q^2}$. 

If we think of $\mu$ as a partition, then it is a partition of $n$ with
at most $N$ non-zero parts such that $\mu_1-\mu_N \leq a$. Equivalently, $\lambda=\mu^\vee$ is a partition of $n$ with largest part at most $N$, smallest main hook $l$, and $l-N \leq a$.  

Returning to our problem, given a partition $\la \neq (n),(1^n)$ with smallest main hook $l$, let $N \geq 2$ be its largest part and $a=l-N+k$ for a nonnegative integer $k$. Note that $a>0$. It follows from above that $D_{\la}$ is unitary at $q=exp(\pm 2\pi i/(l+k))$. This finishes the proof.
\end{proof}

The KZ functor defined in \cite{GGOR} maps representations  from the category $\OO$ of the the rational Cherednik algebra $H_c(W)$ to representations of the Hecke algebra
$\cH_q(W)$ where $q=e^{2 \pi i c}$. This functor maps the irreducible lowest weight module $L_c(\la)$ to the irreducible representation $D_{\la}$, if $c>0$, and to $D_{\la^{\vee}}$, if $c<0$. Note that $L_c(\la)$ is mapped to $0$ if $\la$ is not $e$-restricted.
Using the description of the unitary irreducible representations in the category $\OO$ presented in
\cite{ESG}, we obtain the following

\begin{corollary}
The KZ functor maps unitary representations from the category $\OO$ for the rational Cherednik algebra of type A to unitary representations of the Hecke algebra of type A or to zero. Moreover, all unitary representations of the Hecke algebra are obtained in this way.
\end{corollary}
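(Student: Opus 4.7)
The plan is to derive the corollary by comparing the two explicit classifications of unitary irreducible modules --- on the category $\OO$ side (from \cite{ESG}) and on the Hecke algebra side (from Theorem \ref{Main}) --- through the dictionary provided by $\mathrm{KZ}$.

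By \cite{GGOR}, $\mathrm{KZ}$ is an exact functor from category $\OO$ for $H_c(\mathfrak{S}_n)$ to the category of $\cH_q(\mathfrak{S}_n)$-modules, with $q=e^{2\pi i c}$. As recalled just before the statement, $\mathrm{KZ}(L_c(\la))$ equals $D_\la$ when $c>0$ and $\la$ is $e$-restricted, equals $D_{\la^\vee}$ when $c<0$ and the analogous restriction condition holds, and is zero otherwise. The first step is to check that $\mathrm{KZ}$ preserves unitarity. The contravariant Hermitian form on $L_c(\la)$ used in \cite{ESG} is built from an antilinear involution of $H_c(\mathfrak{S}_n)$ that, under $\mathrm{KZ}$, corresponds to the involution $\sig$ on $\cH$, so it induces a nonzero braid-invariant Hermitian form on $\mathrm{KZ}(L_c(\la))$ whenever the latter is nonzero. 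By irreducibility of $D_\la$ (or $D_{\la^\vee}$), this induced form is a nonzero real scalar multiple of the form of Proposition \ref{form}, and rescaling by $\pm 1$ makes it positive definite. Thus $\mathrm{KZ}$ sends unitary modules to unitary modules or zero, giving the first assertion.

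For the converse assertion, I would match the two unitary loci explicitly. Given $c$ and $\la$ with $D_\la$ nonzero and unitary, set $\mu=\la$ if $c>0$ and $\mu=\la^\vee$ if $c<0$; then $\mathrm{KZ}(L_c(\mu))=D_\la$, and it suffices to verify that $L_c(\mu)$ is itself unitary at this $c$. The classification of unitary modules $L_c(\mu)$ in \cite{ESG} is stated in terms of the hook data of $\mu$, and the task is to check case-by-case that, after the sign-dependent partition transformation $\mu\leftrightarrow\la$, it coincides with the locus $U(\la)$ of Theorem \ref{Main}, together with the exceptional cases of Proposition \ref{hook}.

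The principal obstacle is this combinatorial reconciliation: the \cite{ESG} conditions involve hook lengths of $\mu$, whereas $U(\la)$ involves the largest hook $L$ and the main hooks of $\la$, and the two descriptions are related by transposition when $c<0$. Particular care is needed for the exceptional partitions $\la=(n)$ and $\la=(1^n)$ handled in Proposition \ref{hook}, whose unitary loci are more subtle and include all of $(-\frac{1}{2},\frac{1}{2}]$ (for $(1^n)$) or a cofinite subset thereof (for $(n)$), and must be matched against the one-dimensional category $\OO$ modules $L_c(\mathrm{triv})$ and $L_c(\mathrm{sgn})$ from \cite{ESG}.
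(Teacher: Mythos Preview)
The paper derives both assertions of the corollary purely by comparing the explicit classification from \cite{ESG} with Theorem \ref{Main} and Proposition \ref{hook}; no separate conceptual argument is given beyond this matching of unitary loci through the $\mathrm{KZ}$ dictionary. Your treatment of the converse direction follows exactly this route.

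For the forward direction you take a different path: you argue that the contravariant Hermitian form on $L_c(\la)$ descends through $\mathrm{KZ}$ to a braid-invariant form on $D_\la$. This is an attractive idea, but as written it has a gap. You assert that the antilinear involution on $H_c(\mathfrak{S}_n)$ ``corresponds under $\mathrm{KZ}$'' to the involution $\sigma$ on $\cH$, and that the positive definite form therefore induces a braid-invariant form on the image. Neither of these facts is established in the paper or in \cite{GGOR}; making them rigorous amounts to showing that the Hermitian form on $M|_{\hh_{reg}}$ is parallel for the KZ connection (so that monodromy is unitary), which requires checking a compatibility between the form and the Dunkl operators that you have not supplied. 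Even granting that, one would still need to know that the induced form is nonzero on the $\mathrm{KZ}$ image, i.e.\ that the radical of the form on the localized module does not swallow the relevant fiber.

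Since the explicit comparison of unitary loci already proves both directions at once, the simplest fix is to drop the conceptual argument and verify the forward assertion the same way you verify the converse: check, partition by partition, that whenever $L_c(\la)$ is unitary in the sense of \cite{ESG} and its $\mathrm{KZ}$ image is nonzero, the parameter $c$ lies in $U(\la)$ (resp.\ $U(\la^\vee)$). If you wish to retain the conceptual argument, you should either supply the missing flatness computation or cite a reference where the compatibility of $\mathrm{KZ}$ with the Hermitian structures is proved.
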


\begin{note}
The only irreducible unitary representations mapped to zero by the KZ functor are those corresponding to a rectangular partition at $c=\frac{1}{l}$ where $l$ is the width of the rectangle.
\end{note}

We conjecture that the corollary can be generalized to the restriction functor defined in \cite{BE} for all complex reflection groups. 
Let $W$ be a complex reflection group, $\hh$ its reflection representation, and $W'$ a parabolic subgroup of $W$, namely the stabilizer of a point in $\hh$. Let also $H_c(W,\hh)$ be the corresponding rational Cherednik algebra, $c'$ the restriction of the parameter $c$ to $W'$, and $Loc(\hh_{reg}^{W'})$ the category of local systems on $\hh_{reg}^{W'}$. There is a restriction functor 
$$Res \colon \OO_c(W,\hh) \rightarrow \OO_{c'}(W',\hh/\hh^{W'}) \boxtimes Loc(\hh_{reg}^{W'})$$  

Composing it with the functor $Mon \colon Loc(\hh_{reg}^{W'}) \rightarrow Rep  \text{ }\pi_1(\hh_{reg}^{W'})$ which sends local systems on $\hh_{reg}^{W'}$ to representations of the fundamental group $\pi_1(\hh_{reg}^{W'})$, we obtain the functor
$$Res' \colon \OO_c(W,\hh) \rightarrow \OO_{c'}(W',\hh/\hh^{W'}) \boxtimes Rep   \text{ }\pi_1(\hh_{reg}^{W'}).$$

Let $I$ be a finite set. If $M_i \in \OO_{c'}(W',\hh/\hh^{W'})$ and $V_i \in Rep \text{ }\pi_1(\hh_{reg}^{W'})$ for all $i \in I$, we say that the object $\oplus_{i\in I} M_i \boxtimes V_i  \in \OO_{c'}(W',\hh/\hh^{W'}) \boxtimes Rep \text{ }\pi_1(\hh_{reg}^{W'})$ is \emph{unitary} if and only if $M_i$ and $V_i$ are unitary in their categories.

\begin{conjecture}
If $M \in \OO_c(W, \hh)$ is unitary then so is $Res'(M)$.
\end{conjecture}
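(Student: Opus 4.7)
The plan is to deduce the conjecture by transporting the contravariant Hermitian form on $M$ along the Bezrukavnikov–Etingof restriction functor and then observing that the monodromy functor converts flat-connection-invariant positive-definite Hermitian forms into unitary representations of the fundamental group. First I would fix a point $b \in \hh$ with stabilizer $W'$ and recall from \cite{BE} that the completion $\widehat{H_c(W,\hh)}_b$ is Morita equivalent to a completion of $H_{c'}(W',\hh/\hh^{W'}) \otimes D(\hh^{W'})$. The key preliminary step is to verify that this isomorphism intertwines the standard contravariant anti-involution on $H_c(W,\hh)$ (the one used to define the contravariant form on $M$) with the natural anti-involution on the right-hand side, namely the tensor product of the contravariant anti-involution on $H_{c'}(W',\hh/\hh^{W'})$ with the Fourier-type anti-involution on $D(\hh^{W'})$ that exchanges $\hh^{W'}$ and its dual. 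This reduces to an explicit calculation with Dunkl operator formulas. Granting this, the contravariant form on $M$ transports to a contravariant form on $Res(M) \in \OO_{c'}(W',\hh/\hh^{W'}) \boxtimes Loc(\hh_{reg}^{W'})$, and positivity is preserved by a Morita-invariance argument together with passage to an associated-graded fiber.

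Next I would decompose $Res(M) \cong \bigoplus_i M_i \boxtimes V_i$, where the $M_i$ range over the distinct simple objects of $\OO_{c'}(W',\hh/\hh^{W'})$ appearing in the restriction and the $V_i$ are the corresponding multiplicity local systems on $\hh_{reg}^{W'}$. A Schur-type argument together with contravariance shows that the transported form splits as an orthogonal direct sum of tensor product forms whose first factor is a contravariant form on $M_i$ and whose second factor is a Hermitian form on each fiber of $V_i$ preserved by the flat connection; positivity of the total form forces each of these factor forms to be positive definite (after rescaling each tensor factor). Consequently each $M_i$ is unitary in $\OO_{c'}(W',\hh/\hh^{W'})$, while parallel transport along paths in $\hh_{reg}^{W'}$ is an isometry with respect to the fiberwise positive-definite form on $V_i$, so that $Mon(V_i)$ is a unitary representation of $\pi_1(\hh_{reg}^{W'})$. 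This yields unitarity of $Res'(M) = \bigoplus_i M_i \boxtimes Mon(V_i)$ in the sense of the definition preceding the conjecture.

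The main obstacle is the first step: verifying that the BE Morita equivalence intertwines the contravariant anti-involutions. The restriction functor is constructed using formal completions and the action of a specific idempotent, neither of which a priori respects the anti-involution, and one typically has to either produce a self-adjoint idempotent or absorb an explicit correction automorphism of the target algebra. A secondary subtlety is justifying the orthogonal splitting of the contravariant form along the box decomposition in the non-semisimple setting: contravariance forces isotypic components in the $\OO_{c'}$-factor to be mutually orthogonal, but establishing compatibility with the local system structure and simultaneous positivity on each tensor factor requires a careful Jantzen-style argument, quite possibly by deforming the parameter $c$ to a generic value and checking that signatures behave continuously.
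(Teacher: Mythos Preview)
The statement you are addressing is a \emph{conjecture} in the paper, not a theorem; the paper does not prove it in general. Immediately after stating the conjecture the author writes that the paper has proved it only in the special case $W=\mathfrak{S}_n$ with $W'=1$, and even that case is not proved by a direct argument of the kind you propose. Rather, when $W'=1$ the functor $Res'$ is the KZ functor, and the paper's ``proof'' is the Corollary obtained by \emph{comparing two classifications}: the unitary irreducibles in $\OO_c(\mathfrak{S}_n)$ classified in \cite{ESG} and the unitary irreducibles of $\cH_q(\mathfrak{S}_n)$ classified in Theorem~\ref{Main}, and checking that the KZ functor matches them up.

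Your proposal is therefore not a reconstruction of the paper's argument but an outline of a possible attack on the full conjecture. As such it is reasonable in spirit, and you correctly identify the main obstacle: showing that the Bezrukavnikov--Etingof completion/Morita equivalence intertwines the contravariant anti-involutions. This is genuinely nontrivial---the construction in \cite{BE} involves formal completions, a choice of idempotent, and an identification of algebras that has no reason a priori to be $*$-compatible, and to my knowledge this compatibility has not been established in the literature in the generality you need. Your secondary subtlety (orthogonality of isotypic components and positivity on each tensor factor) is more tractable, but still requires care in the non-semisimple case. In short: your plan is a plausible strategy for the open problem, not a recovery of anything the paper proves, and the gap you flag is exactly why the statement remains a conjecture.
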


In this paper, we have proved the conjecture for $W=S_n$, $\germ{h}$ its reflection representation and $W'=1$.

\end{document}